\documentclass{article}

\usepackage{amsmath,amssymb,amsthm}
\usepackage{xurl}
\usepackage[hidelinks]{hyperref}

\title{A counterexample to the symmetric-maximizer conjecture
for Lyapunov operators}

\newtheorem{theorem}{Theorem}
\newtheorem{corollary}[theorem]{Corollary}
\newtheorem{conjecture}[theorem]{Conjecture}
\theoremstyle{remark}
\newtheorem{remark}[theorem]{Remark}

\author{
Daniel Kressner\thanks{Institute of Mathematics, EPFL, Lausanne, Switzerland.
\href{mailto:daniel.kressner@epfl.ch}{%
    \nolinkurl{daniel.kressner@epfl.ch}}}
\and
Bart Vandereycken\thanks{%
  Section of Mathematics, University of Geneva, Geneva, Switzerland.\\
  \href{mailto:bart.vandereycken@unige.ch}{%
    \nolinkurl{bart.vandereycken@unige.ch}}%
}
}

\begin{document}

\raggedbottom

\maketitle

\begin{abstract}
It has been conjectured that the operator norm of the Lyapunov operator induced by the
Frobenius norm is always attained at a symmetric matrix.  The conjecture is
known to hold for all matrices of order at most five.  We give an integer matrix of
order seven for which the skew-symmetric restricted norm is strictly larger
than the symmetric restricted norm.  A rational separator and exact-arithmetic
certificates establish the strict inequality without relying on floating-point
computations.  A direct-sum construction yields counterexamples in every order
$n\geq 7$; the case $n=6$ remains open.
\end{abstract}

\medskip

\section{Introduction and mathematical setting}

Given $A\in\mathbb{R}^{n\times n}$, define the Lyapunov operator
\[
    \mathcal{L}_A:\mathbb{R}^{n\times n}\to
    \mathbb{R}^{n\times n},
    \qquad
    \mathcal{L}_A(X)=AX+XA^\top.
\]
We equip the space of matrices with the Frobenius inner product and norm,
\[
    \langle X,Y\rangle_F=\operatorname{tr}(X^\top Y),
    \qquad
    \|X\|_F=\langle X,X\rangle_F^{1/2},
\]
and consider the induced operator norm
\begin{equation}\label{eq:norm}
    \|\mathcal{L}_A\|
    :=\max_{X\ne0}\frac{\|AX+XA^\top\|_F}{\|X\|_F}.
\end{equation}
After column-wise vectorization, $\|\mathcal{L}_A\|$ is also the spectral norm of
$I_n\otimes A+A\otimes I_n$.

Define the orthogonal subspaces
\[
    \mathbb{S}_n=\{X\in\mathbb{R}^{n\times n}:X^\top=X\},
    \qquad
    \mathbb{K}_n=\{X\in\mathbb{R}^{n\times n}:X^\top=-X\}.
\]
Both subspaces are
invariant under $\mathcal{L}_A$ since
\[
    {\mathcal{L}_A(X)}^\top=\mathcal{L}_A(X^\top).
\]
Relative to this orthogonal decomposition, $\mathcal{L}_A$ is therefore block
diagonal and its operator norm is the maximum of the
norms of these blocks (see also ~\cite[Eq.~(5)]{ChengZhuQi2009}):
\begin{equation}\label{eq:parity-decomposition}
    \|\mathcal{L}_A\|
    =\max\left\{
       \|\mathcal{L}_A|_{\mathbb{S}_n}\|,
       \|\mathcal{L}_A|_{\mathbb{K}_n}\|
    \right\}.
\end{equation}
Thus a maximizer in~\eqref{eq:norm} can always be chosen in one of these two
subspaces.  The question is whether it can always be chosen symmetric.

\begin{conjecture}[Symmetric-maximizer conjecture]\label{conjecture:byersnash}
For every $A\in\mathbb{R}^{n\times n}$,
\[
    \|\mathcal{L}_A|_{\mathbb{K}_n}\|
    \leq
    \|\mathcal{L}_A|_{\mathbb{S}_n}\|.
\]
Equivalently, the maximum in~\eqref{eq:norm} is attained at a symmetric
matrix.
\end{conjecture}

The statement originated as Theorem~9 of Byers and Nash~\cite{ByersNash1987},
up to replacing $A$ by $A^\top$.  Their main motivation
concerned the smallest singular value,
\begin{equation}\label{eq:min}
    \operatorname{sep}(A,-A^\top)
    =\min_{X\ne0}\frac{\|AX+XA^\top\|_F}{\|X\|_F},
\end{equation}
which is closely related to the conditioning of the Lyapunov equation.  They
proved, in particular, that a symmetric minimizer exists when $A$ is Hurwitz
stable, while the analogous statement fails for general $A$.

In 2015, Chen and Tian identified an error in the proof of the maximization
result and formulated it as a conjecture; they proved it for
$n\leq5$~\cite{ChenTian2015}.  The problem also appears
in~\cite{Cheng2001,ChengZhuQi2009}.  Feng, Lam, Yang, and Li proved the conjecture
for entrywise nonnegative, entrywise nonpositive, and tridiagonal
matrices~\cite{FengLamYangLi2015}.  As of 2021, the problem was described as open
for every $n\geq6$~\cite{KalantarovaTuncel2021}.  For $A,B\in
\mathbb{R}^{n\times n}$, the generalized continuous-time Lyapunov operator is
defined by
\[
    \mathcal{L}_{A,B}(X)=AXB^\top+BXA^\top,
    \qquad X\in\mathbb{R}^{n\times n}.
\]
Chen and Tian proved that the corresponding symmetric-maximizer statement for $\mathcal{L}_{A,B}$
holds for $n\leq3$ and gave a counterexample of order four~\cite{ChenTian2016}.

We disprove Conjecture~\ref{conjecture:byersnash} in every order $n\geq7$ by
giving an exact certificate in order seven and then applying a direct-sum
construction for any $n>7$.  This leaves order six as the only unresolved dimension.

\section{A counterexample of order seven}

\begin{theorem}\label{thm:counterexample}
Let
\begin{equation}\label{eq:A}
A=
\begin{pmatrix}
0&0&0&0&0&0&0\\
0&0&0&0&0&0&0\\
6&-5&-13&0&0&0&0\\
-14&-18&0&0&0&0&0\\
12&-12&11&0&0&0&0\\
0&0&0&-6&0&-14&0\\
0&0&0&6&-18&0&0
\end{pmatrix}.
\end{equation}
Then
\begin{equation}\label{eq:exact-separation}
    \left\|\mathcal{L}_A|_{\mathbb{S}_7}\right\|^2
    <1196<
    \left\|\mathcal{L}_A|_{\mathbb{K}_7}\right\|^2.
\end{equation}
In particular, $A$ is a counterexample to
Conjecture~\ref{conjecture:byersnash}.
\end{theorem}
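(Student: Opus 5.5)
The plan is to turn both inequalities in \eqref{eq:exact-separation} into statements about integer (or rational) symmetric matrices that can be checked in exact arithmetic. First, write the two restricted operators in orthonormal bases. For $\mathbb{S}_7$ (dimension $28$) use $E_{ii}$ and $(E_{ij}+E_{ji})/\sqrt2$ for $i<j$. For $\mathbb{K}_7$ (dimension $21$) use $(E_{ij}-E_{ji})/\sqrt2$. Let $M_S\in\mathbb{R}^{28\times28}$ and $M_K\in\mathbb{R}^{21\times21}$ be the Gram matrices of $X\mapsto\|\mathcal{L}_A(X)\|_F^2$ in these bases. Then $\|\mathcal{L}_A|_{\mathbb{S}_7}\|^2=\lambda_{\max}(M_S)$ and $\|\mathcal{L}_A|_{\mathbb{K}_7}\|^2=\lambda_{\max}(M_K)$. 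The factors $\sqrt2$ cancel in the Gram entries, up to harmless factors $2$ or $1/2$ in the mixed diagonal/off-diagonal entries of $M_S$. After scaling $M_S$ by $2$ (and adjusting the threshold accordingly), both matrices become rational, in fact essentially integer. Equivalently, one can avoid bases entirely: apply $\mathcal{L}_A^*\mathcal{L}_A$ to the basis elements exactly, since $\mathcal{L}_A^*(Y)=A^\top Y+YA$.

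For the lower bound $\lambda_{\max}(M_K)>1196$, it suffices to exhibit one rational skew-symmetric $X\neq0$ with
\[
\|AX+XA^\top\|_F^2>1196\,\|X\|_F^2 .
\]
This is a single exact rational computation. I will obtain $X$ by computing the top eigenvector of $M_K$ numerically, rounding its entries to small rationals or integers (entries with few digits), and verifying the inequality in exact arithmetic. Because the numerical gap to $1196$ is presumably modest, I may need a few more digits, but any rational witness suffices.

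For the upper bound $\lambda_{\max}(M_S)<1196$, I will certify that $1196\,I-M_S$ is positive definite. Concretely, I will compute an exact $LDL^\top$ factorization of $1196\,I-M_S$ over $\mathbb{Q}$ (symmetric Gaussian elimination without pivoting) and check that all pivots are positive. Equivalently, I can check that all leading principal minors are positive, by Sylvester's criterion. Using the structure of $A$ should reduce the work: rows $1,2$ are zero and the nonzeros sit in a lower block pattern, so $M_S$ may decouple partly into smaller blocks. Even without that, an exact $28\times28$ rational elimination is routine by computer. This is the main obstacle: it is the step that genuinely needs exact computation rather than a clever witness. Its difficulty depends on how close $\lambda_{\max}(M_S)$ is to $1196$. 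If the margin is small, the rational pivots grow in size but remain exactly checkable. If I wanted a more human-readable certificate, I would instead seek a rational matrix $R$ with
\[
1196\,I-M_S=R^\top R+\varepsilon I
\]
for some rational $\varepsilon>0$, obtained by rounding a numerical Cholesky factor and verifying the residual is diagonally dominant.

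Combining the two certificates gives \eqref{eq:exact-separation}. By \eqref{eq:parity-decomposition}, the norm of $\mathcal{L}_A$ is then attained only on $\mathbb{K}_7$, so $A$ contradicts Conjecture~\ref{conjecture:byersnash}.
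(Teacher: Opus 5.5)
Your strategy is the same as the paper's: an explicit rational skew-symmetric witness for the lower bound, an exact $LDL^\top$ positivity certificate over $\mathbb{Q}$ for the upper bound, then \eqref{eq:parity-decomposition}. There is one concrete error in your setup for the symmetric side. In the orthonormal basis $E_{ii}$, $(E_{jk}+E_{kj})/\sqrt2$ of $\mathbb{S}_7$, the mixed entries of $M_S$ are $\langle\mathcal{L}_A(E_{ii}),\mathcal{L}_A(E_{jk}+E_{kj})\rangle_F/\sqrt2$. That is an integer divided by $\sqrt2$, not an integer times $2$ or $1/2$. For example, with $a_1,a_2$ the first two columns of $A$, $\langle\mathcal{L}_A(E_{11}),\mathcal{L}_A(E_{12}+E_{21})\rangle_F=2\,a_1^\top a_2=156$, so the corresponding entry of $M_S$ is $78\sqrt2$. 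Scaling by $2$ does not remove this. Hence $1196I-M_S$ is not rational, and the step ``exact $LDL^\top$ of $1196I-M_S$ over $\mathbb{Q}$'' fails as written; you would have to work in $\mathbb{Q}(\sqrt2)$ with sign tests. Your basis-free variant does not escape this either. Applying $\mathcal{L}_A^*\mathcal{L}_A$ to the unnormalized basis gives a rational but nonsymmetric matrix, and to certify its eigenvalues by a symmetric factorization you must again symmetrize with the Gram matrix.

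The repair is a diagonal congruence. Let $D_S=\operatorname{diag}(I_7,2I_{21})$ be the Gram matrix of the unnormalized basis, and $G$ the integer Gram matrix of $\mathcal{L}_A$ applied to that basis. Then $M_S=D_S^{-1/2}GD_S^{-1/2}$, so
\[
D_S^{1/2}(1196I-M_S)D_S^{1/2}=1196D_S-G .
\]
This is an integer matrix, and by Sylvester's law of inertia it is positive definite exactly when $1196I-M_S$ is. This is precisely the paper's formulation. It uses the generalized Rayleigh quotient \eqref{eq:rayleigh} and certifies $1196D_S-B_S^\top D_SB_S\succ0$ via exact pivots $d_i\geq82$. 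For the lower bound it gives the integer witness $K$ with $64387950/53836>1196$. The skew side of your argument is unaffected, since every basis element of $\mathbb{K}_7$ carries the same factor $1/\sqrt2$ and $M_K$ is rational. With this correction your proof coincides with the paper's.
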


\begin{proof}
Let $E_{ij}$ denote the elementary matrices.  Order a basis of $\mathbb{S}_7$ as
\[
E_{11},\ldots,E_{77},\quad
E_{12}+E_{21},E_{13}+E_{31},\ldots,E_{67}+E_{76},
\]
where the pairs $(i,j)$ with $i<j$ are ordered lexicographically.  Use the
similarly ordered basis
\[
E_{12}-E_{21},E_{13}-E_{31},\ldots,E_{67}-E_{76}
\]
of $\mathbb{K}_7$.  The
Gram matrices for these bases are
\[
    D_S=\operatorname{diag}(I_7,2I_{21}),
    \qquad
    D_K=2I_{21}.
\]
In addition, the respective coordinate matrices of $\mathcal{L}_A$ are denoted by
\[
    B_S\in\mathbb{Z}^{28\times28},
    \qquad
    B_K\in\mathbb{Z}^{21\times21}.
\]

Let $\mathbb{U}_7$ denote either $\mathbb{S}_7$ or $\mathbb{K}_7$. Likewise, $U$ is either $S$ or $K$.
If $X\in\mathbb{U}_7$ has coordinate vector $x$ in the corresponding basis,
then the definition of the Gram matrix gives
$\|X\|_F^2=x^\top D_U x$.  Moreover, $B_U x$ is the coordinate vector of
$\mathcal{L}_A(X)$ and so
$\|\mathcal{L}_A(X)\|_F^2=x^\top B_U^\top D_U B_U x$.  Taking the maximum over
all nonzero coordinate vectors gives
\begin{equation}\label{eq:rayleigh}
    \left\|\mathcal{L}_A|_{\mathbb{U}_7}\right\|^2
    =\max_{x\ne0}
      \frac{x^\top B_U^\top D_U B_U x}{x^\top D_U x}.
\end{equation}

For the symmetric restriction, set
\begin{equation}\label{eq:MS}
    M_S:=1196D_S-B_S^\top D_S B_S.
\end{equation}
Exact rational arithmetic yields a factorization
\[
    M_S=L\operatorname{diag}(d_1,\ldots,d_{28})L^\top,
    \qquad L\in\mathbb{Q}^{28\times28},
\]
where $L$ is unit lower triangular and $d_i\geq82$ for every $i$.  Hence
$M_S\succ0$.  Equations~\eqref{eq:MS} and~\eqref{eq:rayleigh} then give
\begin{equation}\label{eq:sym-upper}
    \left\|\mathcal{L}_A|_{\mathbb{S}_7}\right\|^2<1196.
\end{equation}

For the skew-symmetric restriction, consider the integer matrix
\[
K=\begin{pmatrix}
0&4&-9&41&71&4&6\\
-4&0&-11&-59&105&-12&3\\
9&11&0&59&-10&36&-8\\
-41&59&-59&0&9&0&2\\
-71&-105&10&-9&0&-15&0\\
-4&12&-36&0&15&0&4\\
-6&-3&8&-2&0&-4&0
\end{pmatrix}\in\mathbb{K}_7.
\]
Direct integer arithmetic gives
\[
    \|K\|_F^2=53836,
    \qquad
    \|\mathcal{L}_A(K)\|_F^2=64387950,
\]
and hence
\begin{equation}\label{eq:skew-lower}
    \left\|\mathcal L_A|_{\mathbb K_7}\right\|^2
    \geq
    \frac{\|\mathcal L_A(K)\|_F^2}{\|K\|_F^2}
    >1196.
\end{equation}
Combining~\eqref{eq:sym-upper} and~\eqref{eq:skew-lower}
proves~\eqref{eq:exact-separation}.  Finally,~\eqref{eq:parity-decomposition} shows
that the full norm is attained on the skew-symmetric subspace and cannot be
attained at a symmetric matrix.
\end{proof}

The accompanying Python program \texttt{verify\_counterexample.py} constructs
$B_S$ and $B_K$ from~\eqref{eq:A}, performs the $LDL^\top$ elimination over
$\mathbb{Q}$, verifies $d_i\geq82$ for every $i$, and checks all integer
identities above.

\begin{remark}\label{rem:numerics}
Floating-point computation gives
\[
    \left\|\mathcal{L}_A|_{\mathbb{S}_7}\right\|^2
       \approx1195.593985,
    \qquad
    \left\|\mathcal{L}_A|_{\mathbb{K}_7}\right\|^2
       \approx1196.025224.
\]
The integer $1196$ in~\eqref{eq:MS} was chosen because it lies
strictly between these two approximate values.  This numerical computation
serves only to motivate the exact certificate and is not used in the proof.
\end{remark}

\section{Counterexamples in every order
\texorpdfstring{$n\geq7$}{n greater than or equal to 7}}

\begin{corollary}\label{cor:higher-orders}
Conjecture~\ref{conjecture:byersnash} is false for every $n\geq7$.
\end{corollary}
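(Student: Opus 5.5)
For $n\geq7$ we embed the order-seven counterexample by a direct sum. Set $m=n-7$ and let $A_n=A\oplus 0_m\in\mathbb{R}^{n\times n}$, with $A$ from~\eqref{eq:A}; any $A\oplus cI_m$ with small $c$ would do as well. We show that the symmetric and skew-symmetric restricted norms of $\mathcal{L}_{A_n}$ equal those of $\mathcal{L}_A$. Theorem~\ref{thm:counterexample} then transfers directly.

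Partition $X\in\mathbb{R}^{n\times n}$ conformally as
\[
X=\begin{pmatrix}X_{11}&X_{12}\\X_{21}&X_{22}\end{pmatrix},\qquad X_{11}\in\mathbb{R}^{7\times7}.
\]
A direct computation gives
\[
\mathcal{L}_{A_n}(X)=\begin{pmatrix}\mathcal{L}_A(X_{11})&AX_{12}\\X_{21}A^\top&0\end{pmatrix}.
\]
If $X$ is symmetric (resp.\ skew-symmetric), then $X_{11}$ lies in $\mathbb{S}_7$ (resp.\ $\mathbb{K}_7$), $X_{21}=\pm X_{12}^\top$, and $X_{22}$ is arbitrary of the matching parity. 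Therefore
\[
\|\mathcal{L}_{A_n}(X)\|_F^2=\|\mathcal{L}_A(X_{11})\|_F^2+2\|AX_{12}\|_F^2,
\]
while $\|X\|_F^2=\|X_{11}\|_F^2+2\|X_{12}\|_F^2+\|X_{22}\|_F^2$.

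This gives, for both $\mathbb{U}\in\{\mathbb{S},\mathbb{K}\}$,
\[
\|\mathcal{L}_{A_n}|_{\mathbb{U}_n}\|^2=\max\bigl\{\|\mathcal{L}_A|_{\mathbb{U}_7}\|^2,\ \|A\|_2^2\bigr\}.
\]
The lower bound comes from taking $X_{12}=X_{22}=0$. The upper bound follows because the ratio of the two sums is at most the maximum of the termwise ratios, and $\|AX_{12}\|_F\leq\|A\|_2\|X_{12}\|_F$.

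The one real step is to confirm that $\|A\|_2^2$ does not spoil the separation, i.e.\ that $\|A\|_2^2\le\|\mathcal{L}_A|_{\mathbb{S}_7}\|^2$. We argue it in general. Let $v$ be a unit top right singular vector of $A^\top$ with $\|A^\top v\|_2=\|A\|_2$, and take $X=vv^\top\in\mathbb{S}_7$, so $\|X\|_F=1$ and $\mathcal{L}_A(X)=(Av)v^\top+v(Av)^\top$. Then
\[
\|\mathcal{L}_A(vv^\top)\|_F^2=2\|Av\|^2+2(v^\top Av)^2\ge 2\|Av\|^2 .
\]
This is in terms of $Av$ rather than $A^\top v$, so we instead choose $v$ to be a top right singular vector of $A$ itself, giving $\|Av\|=\|A\|_2$. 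Hence $\|\mathcal{L}_A|_{\mathbb{S}_7}\|^2\geq 2\|A\|_2^2\geq\|A\|_2^2$.

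Consequently $\|\mathcal{L}_{A_n}|_{\mathbb{S}_n}\|^2=\|\mathcal{L}_A|_{\mathbb{S}_7}\|^2<1196$. Also $\|\mathcal{L}_{A_n}|_{\mathbb{K}_n}\|^2\ge\|\mathcal{L}_A|_{\mathbb{K}_7}\|^2>1196$ by~\eqref{eq:exact-separation}. So $A_n$ violates Conjecture~\ref{conjecture:byersnash} for every $n\ge7$, and by~\eqref{eq:parity-decomposition} its norm is not attained at a symmetric matrix.
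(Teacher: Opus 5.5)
Your proof is correct. The padding step, namely the block formula for $\mathcal{L}_{A\oplus 0_m}$ and the identity $\|\mathcal L_{A_n}|_{\mathbb U_n}\|^2=\max\{\|\mathcal L_A|_{\mathbb U_7}\|^2,\|A\|_2^2\}$, is exactly what the paper does.

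You differ from the paper in how you dispose of the $\|A\|_2^2$ term.
\begin{itemize}
\item The paper uses the sparsity pattern of this particular $A$. It permutes the nonzero part into an orthogonal direct sum of a $3\times 3$ block $A_1$ and a $2\times 3$ block $A_2$, and then bounds $\|A\|_2^2\le\max\{\|A_1\|_F^2,\|A_2\|_F^2\}=1159<1196$.
\item You instead prove a general inequality. With $v$ a unit top right singular vector of $A$,
\[
\|\mathcal L_A|_{\mathbb S_n}\|^2\ \ge\ \|\mathcal L_A(vv^\top)\|_F^2\ =\ 2\|Av\|_2^2+2(v^\top Av)^2\ \ge\ 2\|A\|_2^2 .
\]
\end{itemize}

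The paper's route is a one-line arithmetic check, but it is tied to this specific matrix. Your route needs no information about $A$. It shows that zero-padding leaves the symmetric restricted norm exactly unchanged, while the skew-symmetric restricted norm can only grow. So any counterexample of order $n_0$ automatically yields counterexamples in every order $n\ge n_0$.

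In a final write-up, drop the false start with $A^\top$ and state the choice of $v$ directly. Also either omit or justify the aside about $A\oplus cI_m$, since you do not use it.
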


\begin{proof}
For $m=n-7$, let $\widehat A=A\oplus0_m$, where $0_m$ denotes the
$m\times m$ zero matrix.  Writing a symmetric or skew-symmetric matrix
conformally with this block decomposition gives orthogonal decompositions into
the $7\times7$ part, the off-diagonal part, and the $m\times m$ part.  On the
off-diagonal part the Lyapunov operator acts as $Y\mapsto AY$, while it
vanishes on the $m\times m$ part.  Hence
\begin{align}
\left\|\mathcal L_{\widehat A}|_{\mathbb S_n}\right\|^2
&=\max\left\{
\left\|\mathcal L_A|_{\mathbb S_7}\right\|^2,\|A\|_2^2
\right\},\label{eq:padding-sym}\\
\left\|\mathcal L_{\widehat A}|_{\mathbb K_n}\right\|^2
&=\max\left\{
\left\|\mathcal L_A|_{\mathbb K_7}\right\|^2,\|A\|_2^2
\right\}.\label{eq:padding-skew}
\end{align}

After independent row and column permutations, the nonzero part of $A$ is the
orthogonal direct sum of
\[
A_1=\begin{pmatrix}
6&-5&-13\\
-14&-18&0\\
12&-12&11
\end{pmatrix},
\qquad
A_2=\begin{pmatrix}
-6&0&-14\\
6&-18&0
\end{pmatrix}.
\]
Therefore
\begin{align}
\|A\|_2^2
&=\max\{\|A_1\|_2^2,\|A_2\|_2^2\}\leq\max\{\|A_1\|_F^2,\|A_2\|_F^2\}\notag\\
&=\max\{1159,592\}<1196.\label{eq:A-bound}
\end{align}
Equations~\eqref{eq:exact-separation} and~\eqref{eq:padding-sym}--\eqref{eq:A-bound} imply
\[
    \left\|\mathcal L_{\widehat A}|_{\mathbb S_n}\right\|^2
    <1196<
    \left\|\mathcal L_{\widehat A}|_{\mathbb K_n}\right\|^2,
\]
which proves the claim.
\end{proof}

\section{Computational discovery and use of artificial intelligence}
\label{sec:computational-discovery}

The computational discovery used OpenAI's \texttt{gpt-5.6-sol} at high reasoning effort.  The model was given the
conjecture and was allowed to design the numerical test itself, rather than
being supplied with an objective function or an optimization method.  Its
initial approach led directly to the desired counterexample: exploit the
orthogonal splitting into symmetric and skew-symmetric matrices and numerically maximize
the gap
\[
    g(A):=
    \left\|\mathcal L_A|_{\mathbb K_n}\right\|
    -\left\|\mathcal L_A|_{\mathbb S_n}\right\|
\]
on the Frobenius unit sphere $\|A\|_F=1$.  For each trial matrix, the two
restricted norms were evaluated as largest singular values in orthonormal
coordinate bases.  The corresponding singular vectors supplied a gradient,
which was projected onto the tangent space of the sphere before the next
optimization step.

The first counterexample obtained in the completed search had order nine and
was found with Adam~\cite{KingmaBa2015} from nine Gaussian random starts.  The
moment parameters had the standard values $\beta_1=0.9$ and $\beta_2=0.999$.
The problem-specific stepsize was initially $0.02$ for $800$ iterations and
$0.006$ during refinement, with a quadratic decay to $15\%$ of its initial
value. All these values were chosen by the model.  The matrix was
normalized back to the Frobenius unit sphere after every step.  Manual
follow-up prompting then asked the same model to reduce the dimension and to
seek matrices with fewer nonzero entries and smaller coefficients.  Repeated
numerical searches and exact checks led to the sparse order-seven integer
matrix in~\eqref{eq:A}.

Given that the counterexample was found easily by numerical search, we investigated afterwards the role of the optimizer. 
It turns out that Adam's role was not incidental.  In later checks, the methods that worked most
consistently combined first-moment momentum with root-mean-square scaling of
the raw gradient.  Adam, and a few close variants, could escape the
large equality ridge $g(A)=0$; methods using only one of these ingredients did
not find a gap.  L-BFGS also failed from random
starts. This is
an empirical observation about this search on a few hundred random start matrices, not a general claim about the
optimizers.

The numerical computations served only to discover candidate matrices.  The
matrix in Theorem~\ref{thm:counterexample} and every inequality used in its
proof were subsequently verified in exact arithmetic, independently of the
floating-point search. 

\section{Conclusion}

Theorem~\ref{thm:counterexample} and Corollary~\ref{cor:higher-orders} settle
the symmetric-maximizer conjecture negatively in every order $n\geq7$.
Together with the positive result for $n\leq5$~\cite{ChenTian2015}, this leaves
only order six unresolved.  In particular, the present argument does not claim
that seven is the smallest order in which a counterexample can occur.

{\small
\bibliographystyle{alpha}
\bibliography{bib}
}

\end{document}